\numberwithin{equation}{section}
\newcommand{\T}{\mathbb{T}}
\newcommand{\R}{\mathbb{R}}
\newcommand{\Z}{\mathbb{Z}}
\newcommand{\TT}{\mathbb{T}^2}
\newcommand{\RR}{\mathbb{R}^2}
\newcommand{\ZZ}{\mathbb{Z}^2}
\newcommand{\cU}{\mathcal{U}}
\newcommand{\tU}{\tilde{U}}
\newcommand{\tV}{\tilde{V}}
\newcommand{\tf}{\tilde{f}}
\newcommand{\tp}{\tilde{p}}
\newcommand{\tq}{\tilde{q}}
\newcommand{\tx}{\tilde{x}}
\newcommand{\ty}{\tilde{y}}
\newcommand{\tz}{\tilde{z}}
\newcommand{\tgamma}{\tilde{\gamma}}
\newcommand{\tsigma}{\tilde{\sigma}}
\newcommand{\tS}{\tilde{\Sigma}}
\newcommand{\tG}{\tilde{\Gamma}}
\DeclareMathOperator{\image}{Im}
\newtheorem{theorem}{Theorem}[section]
\newtheorem*{theorem2}{Theorem 2.2 restated}
\newtheorem{proposition}[theorem]{Proposition}
\newtheorem{lemma}[theorem]{Lemma}
\newtheorem{definition}[theorem]{Definition}
\title{Transitivity of conservative toral endomorphisms}
\date{\today}
\author[Martin Andersson]{Martin Andersson}
\email{martin@id.uff.br}
\begin{document}

\begin{abstract}
It is shown that if a non-invertible area preserving local homeomorphism on $\TT$ is homotopic to a linear expanding or hyperbolic endomorphism, then it must be topologically transitive. This gives a complete characterization, in any smoothness category, of those homotopy classes of conservative endomorphisms that consist entirely of transitive maps.
\end{abstract}

\maketitle

\section{Introduction}

Robust transitivity and stable ergodicity are two central themes in dynamical systems. They run parallel, feeding from each others, and are often seen as analogous concepts in the dissipative versus conservative setting. Although typically studied in the context of diffeomorphisms, interest in robustly transitive endomorphisms is growing. Sumi \cite{MR1469436} gave the first example of non-hyperbolic robustly transitive local diffeomorphisms of $\TT$. Later, Gan and He \cite{MR3043026} proved that such examples exist in every homotopy class of toral endomorphisms.  Lizana and Pujals \cite{MR3082540} have recently given conditions under which endomorphisms of $\mathbb{T}^n$ are robustly transitive, and the interplay of robust transitivity with periodic points and invariant measures is further studied in \cite{LPV}. On the other hand, stable ergodicity of endomorphisms has never been seriously studied. This is somewhat surprising, since many questions about the dynamics of diffeomorphisms with some hyperbolicity has a simpler, and sometimes more revealing, lower dimensional non-invertible analogue. In fact, the present work originated from the question of whether every volume preserving smooth enough diffeomorphism on $\mathbb{T}^3$ which has dominated splitting and is isotopic to Anosov is ergodic, or at least transitive. It was suggested by Enrique Pujals that the question may be elucidated by considering local diffeomorphisms homotopic to an expanding map on $\TT$. This approach turned out to be very fruitful because it reveals more clearly how topological information given by the action in the fundamental group translates into dynamics. Although obtaining ergodicity in such a gneral context may currently be without reach, the present work answers the question of transitivity in the affirmative. The analogous case of diffeomorphisms isotopic to Anosov on $\T^3$ is treated separately in \cite{2015arXiv150306501A}.

As it turns out, transitivity of conservative endomorphisms homotopic to expanding or hyperbolic linear maps is a purely topological phenomenon, resulting only from the way the map acts in the fundamental group, together with the constraint of being area preserving and non-invertible. The whole argument relies on a surprisingly simple description of the topology of regular open invariant sets. In particular, it is independent of any infinitesimal analysis such as Lyapunov exponents or  domination. This is what allows us to work within the setting of local homeomorphisms. The low regularity comes "for free"; it is not pursued for its own sake.

An obvious question is of course whether there is a non-conservative analogue of Theorem \ref{main}. A natural candidate would be to ask that the endomorphism in question be area expanding, i.e. that it has a Jacobian larger than one at every point (see \cite[Theorem 2]{MR3082540}). But the condition of being area preserving cannot be replaced with forward area expansion, at least not in those homotopy classes in which the linear representative has integer eigenvalues. For example, let $f_1 : \R/\Z \to \R / \Z$ be the linear expanding map $x \mapsto 2x \mod 1$, and let $f_2 : \R / \Z \to \R / \Z $ be a deformation of $f_1$ having derivative larger than $\frac{1}{2}$ at every point, but  mapping a neighborhood of the origin into itself. Then the map $f: \TT \to \TT$ defined by $f(x,y) = (f_1(x) , f_2(y))$ is not transitive, even though it is  homotopic to a linear expanding map (with $2$ as a double eigenvalue) and has a Jacobian strictly larger than one at every point. I thank Radu Saghin for pointing this out. I would also like to thank Shaobo Gan, Enrique Pujals, and Rusong Zheng for helpful discussions; the anonymous referee for important comments and a simple proof of Lemma \ref{invariant under iterate}; and the hospitality of Peking University where this work took place.

\section{The result}

Let $\TT = \RR / \ZZ$ be the two dimensional torus endowed with its Haar measure $\lambda$. By a toral endomorphism we mean a local homeomorphism $f: \TT \to \TT$. In other words, an endomorphism is a covering map from the torus to itself. We prefer the term endomorphism to covering map because we consider it as a dynamical system rather than a tool of algebraic topology. We say that  $f: \TT \to \TT$ is area preserving (or conservative) if $\lambda(f^{-1}(A)) = \lambda(A)$ for every Borel measurable set $A \subset \TT$. When $f$ is of class $C^1$, this is equivalent to say that $\sum_{x' \in f^{-1}(x)} |\det Df(x')|^{-1} = 1$ for every $x \in \TT$.

Every non-singular $2\times 2$ matrix $A$ with integer coefficients induces an area preserving endomorphism on $\TT$ which we also denote by $A$ as there is no possibility of confusion.  Moreover, every toral endomorphism (area preserving or not) is homotopic to precisely one such linearly induced map, determined by the action of $f$ in the fundamental group $\pi_1(\TT)$, when the latter is identified with $\ZZ$ in the obvious way. It is invertible if and only if $|\det(A)|=1$. In general, if $|\det(A)|=d$, the pre-image of every point in $\TT$ has cardinality $d$. The same is true for any endomorphism $f$ homotopic to $A$. We say that $d$ is the degree of $f$ or, equivalently, the number of sheets of $f$.

\begin{definition}
We say that an endomorphism $f: \TT \to \TT$ is transitive if there exists $x \in \TT$ whose forward orbit $\{f^n(x): n \geq 0 \}$ is dense in $\TT$. 
\end{definition}

\begin{theorem}\label{main}
Let $f: \TT \to \TT$ be an area preserving toral endomorphism of degree at least two. If $f$ is not transitive, then it is homotopic to a linear map which has a real eigenvalue of modulus one.
\end{theorem}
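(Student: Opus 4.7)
I would argue by contrapositive: assume $f$ is area preserving of degree $d \geq 2$ and not transitive, and show that the linear representative $A$ has a real eigenvalue of modulus one.

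The first step is to extract a proper totally invariant open set. The Baire category theorem, applied to non-transitivity, gives nonempty open $V_1, V_2 \subset \TT$ with $f^n(V_1) \cap V_2 = \emptyset$ for all $n \geq 0$; then $U := \bigcup_{n \geq 0} f^{-n}(V_2)$ is a proper open set with $f^{-1}(U) \subseteq U$. Area preservation upgrades this to equality, because the open set $U \setminus f^{-1}(U)$ has measure zero and is therefore empty; hence $f^{-1}(U) = U$, and by surjectivity $f(U) = U$. The crucial consequence is that $f|_U$ is a $d$-sheeted covering, since every preimage of a point of $U$ lies in $f^{-1}(U) = U$. A measure-stabilization argument along orbits of the component-assignment $\phi(K) := $ component of $U$ containing $f(K)$ (using $\lambda(\phi(K)) \geq \lambda(K)$ from area-preservation, and finite total measure) lets me, after passing to $f^p$ (and $A$ to $A^p$, which preserves the eigenvalue hypothesis), assume $U$ has a component $K$ with $f^{-1}(K) = K$; in particular $f|_K : K \to K$ is itself a $d$-sheeted covering.

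Next I would lift to $\RR^2$: choose a homeomorphism lift $\tf$ of $f$ with $\tf(x + v) = \tf(x) + Av$ for $v \in \ZZ^2$ and $\tf - A$ bounded, and a component $\tilde K$ of $\pi^{-1}(K)$ with $\tf(\tilde K) = \tilde K$. The stabilizer $\Gamma = \{v \in \ZZ^2 : \tilde K + v = \tilde K\}$ is the image of $\pi_1(K) \to \pi_1(\TT) = \ZZ^2$, and naturality gives $A(\Gamma) \subseteq \Gamma$. The main analysis splits by $\mathrm{rank}(\Gamma) \in \{0, 1, 2\}$. If $\mathrm{rank}\,\Gamma = 0$, the projection $\pi|_{\tilde K}: \tilde K \to K$ is a homeomorphism; either $K$ is a topological disk (impossible, since a disk cannot $d$-cover itself for $d \geq 2$), or a Jordan-curve argument using that $\tilde K$ avoids all its nontrivial $\ZZ^2$-translates forces $\tilde K$ to be bounded --- but then $\tf^n(\tilde K)$ is a bounded $\ZZ^2$-translate of $\tilde K$ of constant diameter, whereas two points of $\tilde K$ whose difference aligns with an expanding direction of $A$ are pushed arbitrarily far apart by $\tf^n$, a contradiction. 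If $\mathrm{rank}\,\Gamma = 1$, write $\Gamma = \Z v$ and $Av = kv$ with $k \in \Z$; since $\tf|_{\tilde K}$ is a homeomorphism it acts as an isomorphism on $\pi_1(\tilde K) = \ker(\pi_1(K) \to \Gamma)$ while acting as multiplication by $k$ on $\Gamma$, and a short exact sequence calculation yields $d = |k|$. Then the other eigenvalue of $A$ has modulus $|\det A|/|k| = 1$, and since the trace of $A$ is an integer it equals $\pm 1$, a real eigenvalue of modulus one --- exactly the conclusion.

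The rank-two case is the real obstacle: $\Gamma = \ZZ^2$ means $\tilde K = \pi^{-1}(K)$ is a connected $\ZZ^2$-invariant proper open subset of $\RR^2$, and its complement projects to a proper closed $f$-invariant $V \subset \TT$ whose complement $K$ has $\pi_1 \twoheadrightarrow \ZZ^2$ (so $V$ is ``topologically thin''). The $d$-to-one constraint rules out $V$ finite (the equation $|V| = d|V|$ fails), so $V$ is infinite. I would then restart the entire argument on $U' := \mathrm{int}(V)$ when this is nonempty, using the area constraint $\lambda(U) + \lambda(V) \leq 1$ and a monotonicity argument to ensure the recursion terminates; and handle $V$ of empty interior by showing directly that an infinite, nowhere-dense, $f$-invariant closed subset of $\TT$ cannot coexist with $\tf$ at bounded distance from a linear map without modulus-one eigenvalues. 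I anticipate this is where the paper's promised ``surprisingly simple description of the topology of regular open invariant sets'' does its essential work, presumably by establishing a discrete invariant that strictly decreases under the recursion.
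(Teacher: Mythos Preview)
Your overall architecture---extract a strictly invariant open set, pass to a periodic component $K$, lift to $\RR^2$, and analyse the rank of the stabiliser $\Gamma$---matches the paper's, and your rank-$1$ computation via the short exact sequence is correct and essentially the paper's endgame. But there is a genuine gap: the rank-$2$ case is not handled. You yourself flag it as ``the real obstacle'' and propose a recursion on $U' = \operatorname{int}(V)$, but nothing guarantees termination, and the nowhere-dense subcase is left as a hope. Your rank-$0$ argument is also incomplete: the dichotomy ``disk or bounded'' for $\tilde K$ is not justified (a connected open set in $\RR^2$ disjoint from all its nontrivial integer translates need not be bounded, even if it is not simply connected), and the diameter argument that follows is only a sketch.

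The paper's key idea, which you are missing, is to keep \emph{two} invariant sets in play rather than one. From non-transitivity and area preservation one obtains a \emph{complementary pair} $U, V$ of strictly invariant \emph{regular} open sets with $U = V^{\perp}$. Both $U$ and $V$ are then subjected to the same analysis, and the fact that each has winding components (proved by a clean maximal-area argument, Lemma~3.7, not Jordan curves) does double duty. First, since a loop $\sigma$ in $V$ represents a nonzero class in $\pi_1(\TT)$ and $U \cap V = \emptyset$, every class in $i_\star \pi_1(U_0)$ must be collinear with $[\sigma]$ (else the loops intersect); this forces $\operatorname{rank}\Gamma = 1$ and eliminates your rank-$2$ case outright. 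Second, the winding of $V$'s components is exactly what shows that each component of $\pi^{-1}(U)$ is simply connected (Lemma~3.8): a non-contractible loop in $\tilde U_0$ would enclose a point of some $\tilde V_0$, but $\tilde V_0$ is translation-invariant in some direction and hence escapes to infinity, forcing an intersection. So there is no recursion and no nowhere-dense case to worry about.

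One smaller slip: the set $U \setminus f^{-1}(U)$ is not open (it is open minus open), so ``measure zero hence empty'' fails as stated. The paper fixes this by passing to closures and then to the regularisation $U^{\perp\perp}$; the resulting set is regular open and strictly invariant, and this regularity is used later (e.g.\ in the simply-connected-lift argument).
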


We remark that a $2 \times 2$ matrix $A$ with integer coefficients satisfying $| \det A | \geq 2$ cannot have non-real eigenvalues on the unit circle. More precisely, one of the following holds:
\begin{enumerate}
\item $A$ has two integer eigenvalues $\lambda, \mu$ with $1 = | \lambda| < |\mu|$
\item $A$ has two integer eigenvalues $\lambda, \mu$ with $1<|\lambda|\leq |\mu|$
\item $A$ has two real irrational eigenvalues $\lambda, \mu$ with $0<|\lambda|<1<|\mu|$
\item $A$ has two real irrational eigenvalues $\lambda, \mu$ with $1<|\lambda|<|\mu|$
\item $A$ has two non-real eigenvalues $\lambda, \overline{\lambda}$ with $1<|\lambda|=|\overline{\lambda}|$.
\end{enumerate}
Usually all of these cases except (1) are referred to as hyperbolic, but  cases (2), (4) and (5) are often called expanding, in which case the word hyperbolic is reserved for the case (3) only. More details on linear toral endomorphisms can be found in Chapter 1 of \cite{aoki1994topological}.

If a non-singular linear map $A:\RR \to \RR$ that satisfies $A (\ZZ) \subset \ZZ$ has a real eigenvalue of modulus one, then it has an invariant one dimensional subspace of rational inclination corresponding to its other eigenvalue. This subspace projects to an invariant circle on $\TT$. This circle can be thickened to create an invariant stripe. In other words, the map induced by $A$ on $\TT$ is not transitive. Conversely, a linear non-singular map on $\RR$ preserving $\ZZ$ which has no  real eigenvalue of modulus one induces a transitive (in fact ergodic --- see \cite{aoki1994topological} or the appendix of \cite{MR1846198}) map on $\TT$ according to Theorem \ref{main}.  We can therefore rephrase Theorem \ref{main} as follows. 

\begin{theorem2}
If a linear map on $\TT$ of degree at least two is transitive, then its whole homotopy class of area preserving endomorphisms consists entirely of transitive elements.
\end{theorem2}

It was shown by Hetzel et al. \cite{MR2321252} that if one chooses a $2 \times 2$ matrix with integer entries at random with respect to the uniform point distribution on some interval $[-N, N] \subset \Z$, then the probability that the matrix has integer eigenvalues tends to zero as $N$ tends to infinity. That provides us with a precise way in which Theorem \ref{main} can be interpreted to say that \emph{most homotopic classes of conservative endomorphisms consist entirely of transitive elements}.



Before turning to the proof of Theorem \ref{main}, we make some remarks regarding notation. The symbol $f$ will always represent an endomorphism of $\TT$ and $A$ will always denote the linear map to which it is homotopic. The canonical projection from $\RR$ to $\TT$ is denoted by $\pi$. A deck transformation is a translation in $\RR$ by an element of $\ZZ$. The symbol $\tf$ will always refer to a lift of $f$ to $\RR$, i.e. a homeomorphism of $\RR$ satisfying $\pi \tf = f \pi$. It satisfies $\tf(x + v) = \tf(x) + Av$ for every $x \in \RR$ and $v \in \ZZ$.

\section{Invariant regular open sets}
An open subset of $\TT$ is called \emph{regular} if it is equal to the interior of its closure. Given $A \subset \TT$ we write $A^\perp =  \TT \setminus \overline{A}$. Note that  an open set $U \subset X$ is regular if and only if $U^{\perp \perp} = U$. 
For any open set $U \subset \TT$ we have inclusion $U \subset U^{\perp \perp}$ and the equality $U^{\perp} = U^{\perp \perp \perp}$. In particular, $U^\perp$ is a regular open set. See \cite{MR2466574} for more details. Note also that if $f: \TT \to \TT$ is a local homeomorphism, then 
\begin{equation}\label{perp and inverse commute}
f^{-1}(A^\perp) = (f^{-1}(A))^\perp
\end{equation} 
for every $A \subset \TT$. We say that two non-empty regular open sets $U,V \subset \TT$ form a complementary pair if $U = V^{\perp}$ (or, equivalently, if $V = U^{\perp}$).

It is well known that for a general continuous map $T$ on a topological space $X$ with countable basis, transitivity is equivalent to say that, given non-empty open sets $U,V \subset X$, there exists some $n\geq 0$ such that $T^{-n}(U) \cap V \neq \emptyset$. Therefore, if $T$ is not transitive, there is a non-dense open set $W \subset X$ such that $T^{-1}(W) \subset W$ (for example, the union of all pre-images of $U$). For general continuous maps, or even for local homeomorphisms, it may not be possible to choose $W$ so that this inclusion becomes an equality.

\begin{definition}
We say that a set $ U \subset \TT$ is strictly invariant under $f: \TT \to \TT$ if $f^{-1}(U) = U$.
\end{definition}

It is the existence of strictly invariant non-trivial open sets in the area preserving setting, and what topological restrictions that the presence of such sets imposes on the map itself, that makes up the core of our approach to the proof of Theorem \ref{main}.

\begin{proposition}\label{nontransitive}
Let $f: \TT \to \TT$ be an area preserving endomorphism. Then the following are equivalent:
\begin{enumerate}
\item $f$ is not transitive,
\item there exists a complementary pair of strictly $f$-invariant regular open sets $U, V  \subset \TT$.
\end{enumerate}
\end{proposition}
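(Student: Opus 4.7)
The plan is to prove the two directions separately, with area preservation playing a role only in $(1)\Rightarrow(2)$.

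For the implication $(2)\Rightarrow(1)$, I would argue that strict invariance $f^{-1}(U)=U$ entails both $f(U)\subseteq U$ and $f(\TT\setminus U)\subseteq \TT\setminus U$. Since $U$ and $V$ are non-empty and disjoint (as $V=U^\perp\subseteq \TT\setminus U$), the forward orbit of any point in $U$ stays in $U$ and therefore misses the open set $V$, while the orbit of any point in $\TT\setminus U$ stays in $\overline{V}$ and therefore misses the open set $U$. No orbit can be dense, so $f$ is not transitive.

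For the implication $(1)\Rightarrow(2)$, I would first use the standard characterization of non-transitivity for maps on second countable spaces: there exist non-empty open sets $P,Q\subset\TT$ with $f^{-n}(P)\cap Q=\emptyset$ for every $n\geq 0$. Setting $W=\bigcup_{n\geq 0} f^{-n}(P)$ yields an open, non-empty, non-dense set satisfying $f^{-1}(W)\subseteq W$. I would then define $V=W^\perp$ and $U=V^\perp=W^{\perp\perp}$. Both are regular open sets by the generalities recalled before the proposition, $V$ is non-empty because $W$ is not dense, and $U$ is non-empty because $W\subseteq W^{\perp\perp}=U$. Moreover $U^\perp=V^{\perp\perp}=V$ (since $V$ is regular), so $U,V$ form a complementary pair.

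It remains to verify strict $f$-invariance. The inclusion $f^{-1}(W)\subseteq W$ passes to closures and then to complements, giving $V=W^\perp\subseteq (f^{-1}(W))^\perp = f^{-1}(W^\perp)=f^{-1}(V)$ by the identity \eqref{perp and inverse commute}. Here is where area preservation enters: the measures of $V$ and $f^{-1}(V)$ coincide, so $f^{-1}(V)\setminus V$ is an open set of Haar measure zero and must therefore be empty. Hence $f^{-1}(V)=V$, and applying \eqref{perp and inverse commute} once more yields $f^{-1}(U)=f^{-1}(V^\perp)=(f^{-1}(V))^\perp = V^\perp = U$.

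The main (and only nontrivial) obstacle is the upgrade from the one-sided inclusion $V\subseteq f^{-1}(V)$ to the equality $V=f^{-1}(V)$. Without conservativity this upgrade may fail; with it, the argument reduces to the elementary observation that a non-empty open subset of $\TT$ has positive Haar measure. Everything else is purely set-theoretic manipulation of the regularization operator $\perp$ together with its commutation with $f^{-1}$.
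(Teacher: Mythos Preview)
Your argument is essentially the paper's, and it is correct except for one small slip at the key step. The set $f^{-1}(V)\setminus V$ is \emph{not} open in general: it is the intersection of the open set $f^{-1}(V)$ with the closed set $\TT\setminus V$, and an inclusion of open sets with equal measure need not be an equality (e.g.\ $(0,1)\cup(1,2)\subset(0,2)$). What rescues the step is precisely the regularity of $V$ that you already recorded: from $\lambda(f^{-1}(V)\setminus V)=0$ you get that $V$ is dense in the open set $f^{-1}(V)$, hence $f^{-1}(V)\subset\overline{V}$, and therefore $f^{-1}(V)=\text{int}\,f^{-1}(V)\subset\text{int}\,\overline{V}=V$. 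With this correction your proof goes through.

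For comparison, the paper applies the same measure-equals-density idea one step earlier, to $W$ rather than to $V$: from $f^{-1}(W)\subset W$ and $\lambda(f^{-1}(W))=\lambda(W)$ it concludes that $f^{-1}(W)$ is dense in $W$, hence $\overline{W}=\overline{f^{-1}(W)}=f^{-1}(\overline{W})$ (the last equality because a local homeomorphism has $f^{-1}$ commuting with closure). Strict invariance of $V=W^\perp$ then drops out immediately by taking complements, without needing the regularity of $V$ at that stage. The two routes are interchangeable; yours just needs the regularity of $V$ made explicit in the final step.
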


\begin{proof}
The implication $(2) \implies (1)$ is trivial. 
To see why $(1)$ implies $(2)$, suppose that $T$ is not transitive. 
Then there are open sets $U_0, V_0$ such that $f^{-n}(U_0) \cap V_0 = \emptyset$ for every $n \geq 0$. 
Let $U_1 = \bigcup_{n \geq 0} f^{-n} (U_0)$. Then $f^{-1}U_1 \subset U_1$. 
Since $\lambda(f^{-1}(U_1)) = \lambda(U_1)$ we deduce that $f^{-1}(U_1) $ is in fact an open and dense subset of $U_1$. 
In particular $\overline{U_1} = \overline{f^{-1} (U_1)} = f^{-1}( \overline{U_1})$, the last equality being true because $f$ is a local homeomorphism. 
Take $V = U_1^\perp$. Then $V$ is a regular open strictly $f$-invariant set. Since $V$ is not dense, $U = V^\perp $ is also a non-empty regular open  strictly invariant set. By construction, $U$ and $V$ form a complementary pair.
\end{proof}

Most of the time, it is not relevant that the sets whose existence is guaranteed by Proposition \ref{nontransitive} are regular, but we need it when describing their fundamental group (Lemma \ref{trivial FG} and Lemma \ref{trivial kernel}). 

Note that Proposition \ref{nontransitive} relies only on the endomorphism being conservative. The existence of non-trivial strictly invariant regular open sets is independent of whether the endomorphism is invertible or not. In the invertible case, there is no restriction on the topological nature of such sets. Indeed, every regular open set is invariant under the identity (which is an area preserving endomorphism). The situation changes drastically in the non-invertible case. Here only sets that ``wrap around" the torus are possible. We now formalize this idea.

\begin{definition}
We say that a connected open set $U \subset \TT$ is nonessential if it is homeomorphic to each  connected component of $\pi^{-1}(U)$ in $\RR$. We say that $U$ is essential if it is not nonessential.
\end{definition}

Some of the important differences between  essential and nonessential sets are captured by the following characterization, the proof of which is straightforward.

\begin{proposition}\label{characterization}
Let $U \subset \TT$ be a connected open subset of $\TT$. Then the following are equivalent:
\begin{enumerate}
\item $U\subset \TT$ is essential.
\item There is a non-trivial deck transformation $T: \RR \to \RR$ such that every connected component of $\pi^{-1}(U)$ is $T$-invariant.
\item If $i:U \to \TT$ is the inclusion, then $i_\star \pi_1(U, x_0)$ is a non-trivial subgroup of $\pi_1(\TT, x_0)$ for some (hence every) $x_0 \in U$.

\end{enumerate}
\end{proposition}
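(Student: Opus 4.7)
My plan is to derive all three equivalences directly from the covering-space structure of the universal projection $\pi:\RR\to\TT$. This is a regular cover whose deck transformation group is $\ZZ$, acting by the translations from the statement, and the crucial soft fact is that $\ZZ$ is abelian. Throughout, $\tU$ denotes a connected component of $\pi^{-1}(U)$; since $U$ is open, connected, and locally path connected, each $\tU$ is an open, path-connected subset of $\RR$, and the restriction $\pi|_{\tU}:\tU\to U$ is itself a covering map.

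The key preliminary observation is that $\ZZ$ acts transitively on the set of connected components of $\pi^{-1}(U)$ and that the stabilizer $H:=\operatorname{Stab}(\tU)\subset\ZZ$ is the same for every component. Transitivity comes from connectedness of $U$: if $\tx\in\tU$ and $\ty\in\tU'$ lie above the same point of $U$, then $\ty=T\tx$ for some $T\in\ZZ$, so $T\tU$ is a connected subset of $\pi^{-1}(U)$ meeting $\tU'$, hence $T\tU=\tU'$. Equality of stabilizers then follows from abelianness of $\ZZ$, via $\operatorname{Stab}(T\tU)=T\operatorname{Stab}(\tU)T^{-1}=\operatorname{Stab}(\tU)$.

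The equivalence $(1)\Leftrightarrow(2)$ is then immediate. The covering $\pi|_{\tU}:\tU\to U$ is regular with deck group $H$, so it is a homeomorphism iff $H=\{0\}$; by the preceding paragraph, this condition is independent of which component we picked. Thus $U$ is elementary iff $H=\{0\}$, which means $U$ is winding iff $H$ is non-trivial, and in that case any non-zero $T\in H$ simultaneously fixes every component of $\pi^{-1}(U)$, giving (2). Conversely, (2) trivially forces $H$ to contain a non-zero element.

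For $(2)\Leftrightarrow(3)$, fix $x_0\in U$ and a lift $\tx_0\in\tU\cap\pi^{-1}(x_0)$. A loop $\gamma$ in $U$ based at $x_0$ lifts to a path $\tgamma$ in $\RR$ starting at $\tx_0$; as $\tgamma$ is connected and stays in $\pi^{-1}(U)$, it lies entirely in $\tU$, so its endpoint is $T\tx_0$ for some $T\in\ZZ$, namely $T=i_\star[\gamma]$ under the identification $\pi_1(\TT,x_0)\cong\ZZ$. Hence $i_\star[\gamma]\in H$. Conversely, any $T\in H$ is realized this way: the path-connected set $\tU$ supplies a path from $\tx_0$ to $T\tx_0$ whose projection is the desired loop. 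Thus $i_\star\pi_1(U,x_0)=H$, and basepoint independence (``some/every'') is automatic since $\ZZ$ is abelian. The only real conceptual ingredient is the preliminary observation letting us pass from ``some component'' to ``every component''; everything else is routine covering-space bookkeeping, which is why the proposition is stated as straightforward.
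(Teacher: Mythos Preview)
Your argument is correct. The paper does not actually supply a proof of this proposition---it simply states that ``the proof \dots\ is straightforward''---so there is nothing to compare against; you have cleanly filled in the omitted details via the stabilizer subgroup $H=\operatorname{Stab}(\tU)\subset\ZZ$ and the identification $i_\star\pi_1(U,x_0)=H$, which is exactly the standard covering-space reasoning the author had in mind.
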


\begin{lemma}\label{e2e and w2w}
Let $f: \TT \to \TT$ be an endomorphism and $U \subset \TT$  a strictly $f$-invariant open set. Then $f$ maps nonessential connected components of $U$ to nonessential connected components of $U$, and it maps essential connected components of $U$ to essential connected components of $U$.
\end{lemma}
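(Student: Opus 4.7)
The argument naturally lives on the universal cover, where $\tf$ is an actual homeomorphism of $\RR$. First I would observe that strict invariance lifts cleanly: from $f^{-1}(U) = U$ and $\pi\tf = f\pi$ one deduces $\tf^{-1}(\tU) = \tU$ for $\tU := \pi^{-1}(U)$, and since $\tf$ is a bijection this also gives $\tf(\tU) = \tU$. Consequently $\tf$ permutes the connected components of $\tU$, and for any such component $\tilde{C}$ the map $\tf$ restricts to a homeomorphism onto another component $\tilde{C}' := \tf(\tilde{C})$.

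Next I would match components of $\tU$ with components of $U$. For any component $C$ of $U$, the restriction of $\pi$ to $\pi^{-1}(C)$ is a covering over a path-connected base, so every component of $\pi^{-1}(C)$ surjects onto $C$. In particular the components of $\tU$ are precisely the components of the various $\pi^{-1}(C)$, and $\pi(\tilde{C}) = C$ for the component $C$ containing $\pi(\tilde{C})$. Combining with the previous step,
\[
f(C) = f\pi(\tilde{C}) = \pi\tf(\tilde{C}) = \pi(\tilde{C}'),
\]
which is itself a connected component of $U$; so $f$ does send components to components.

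For the winding/elementary dichotomy I would invoke Proposition \ref{characterization}: $C$ is winding if and only if its lift $\tilde{C}$ is invariant under some non-trivial deck translation. If $\tilde{C} + v = \tilde{C}$ with $v \in \ZZ \setminus \{0\}$, then $\tilde{C}' + Av = \tf(\tilde{C} + v) = \tf(\tilde{C}) = \tilde{C}'$, and non-singularity of $A$ forces $Av \neq 0$, so $C'$ is winding. For the converse, assume $\tilde{C}' + v' = \tilde{C}'$ for some non-zero $v' \in \ZZ$, and set $v := d \cdot A^{-1} v'$; this is an integer vector because $d \cdot A^{-1} = \pm \operatorname{adj}(A)$ has integer entries, and $Av = d v' \neq 0$. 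Iterating the invariance $d$ times yields $\tilde{C}' + Av = \tilde{C}'$, so $\tf(\tilde{C} + v) = \tilde{C}' = \tf(\tilde{C})$; injectivity of $\tf$ then forces $\tilde{C} + v = \tilde{C}$ with $v \neq 0$, so $C$ is winding. Since winding and elementary are complementary, this settles both parts of the lemma.

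The only mildly subtle point is this last converse. Because $A$ may fail to be surjective on $\ZZ$, $v'$ itself need not lie in $A(\ZZ)$; the trick is that the image has finite index $d$, so $dv'$ does, and the adjugate formula provides an explicit integer preimage. Everything else amounts to unraveling definitions on the cover and using that $\tf$, unlike $f$, is globally invertible.
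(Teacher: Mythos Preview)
Your argument is correct and follows essentially the same route as the paper: lift to the universal cover, use that $\tf$ permutes the components of $\tU$, and exploit the equivariance $\tf(\tilde{C}+v)=\tf(\tilde{C})+Av$; the only cosmetic differences are that you invoke the deck-translation characterization of winding from Proposition~\ref{characterization} (the paper instead uses two points with equal projection), and you make the integer preimage explicit via the adjugate, whereas the paper clears denominators from a rational solution of $Av=w$.
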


\begin{proof}
Let $U \subset \TT$ be open and strictly $f$-invariant. Denote by $\tU$ the pre-image of $U$ by $\pi$ and let $\tf$ be a lift of $f$ to $\RR$. Then $\tU$ is a strictly $\tf$-invariant open set in $\RR$. Therefore, $\tf$ maps connected components of $\tU$ to connected components of $\tU$. If $U_e$ is an essential connected component of $U$, then there is some connected component $\tU_e$ of $\tU$ such that $\pi(\tU_e) = U_e$ and distinct $\tp,\tq \in \tU_e$ such that $\pi(\tp) = \pi(\tq)$. Now, $\tf(\tU_e)$ is a connected component of $\tU$ with $\pi( \tf(\tU_e)) = f(U_e)$. Moreover, $\tf(\tp)$ and $\tf(\tq)$ are distinct points in $\tf(\tU_w)$, both of which are mapped by $\pi$ to the point $f(\pi(p)) = f(\pi(q))$. Hence $\pi: \tf(\tU_e) \to f(U_e)$ is not injective, i.e. $f(U_e)$ is an essential connected component of $U$. 

Now suppose $U_{ne}$ is a nonessential connected component of $U$ and let $\tU_{ne}$ be any connected component of $\pi^{-1}(U_{ne})$. Then $(\tU_{ne} + v)\cap \tU_{ne} = \emptyset $ for every non-zero $v \in \ZZ $. Therefore, $(\tf(\tU_{ne})+ Av) \cap \tf(\tU_{ne}) = \emptyset$ for every non-zero $v \in \ZZ$. Let us write $\tV = \tf(\tU_{ne})$. Suppose that $\tV$ is essential. Then there is some non-zero $w \in \ZZ$ such that $\tV + w = \tV$. It follows by induction that $\tV+ nw = \tV$ for every integer $n$. But we know that the equation $Av = w$ can always be solved for some $v \in \mathbb{Q}^2$. That is equivalent to say that $Av = nw$ can be solved for integer $n$ and $v \in \ZZ$. So $\tV + Av = \tV$ for some non-zero $v \in \ZZ$, a contradiction.
\end{proof}

\begin{lemma}\label{not nonessential}
Let $f: \TT \to \TT$ be an area preserving non-invertible endomorphism and suppose that $U \subset \TT$ is open  and strictly $f$-invariant. Then every connected component of $U$ is essential.
\end{lemma}

\begin{proof}
Let $U \subset \TT$ be a strictly $f$-invariant open set. We denote by $\cU_{ne}$ the set of connected components of $U$ which are nonessential.

 Our aim is to prove that $\mathcal{U}_{ne} = \emptyset$. 
Suppose, for the purpose of contradiction, that $\mathcal{U}_{ne}$ is non-empty. 
Since $\lambda(U_\iota) \leq 1$ for every $U_\iota \in \cU_{ne}$, and since, given any $\epsilon>0$, we have $\lambda(U_\iota)< \epsilon$ for all but finitely many $U_\iota \in \mathcal{U}_{ne}$, 
there is some $U_{m} \in \cU_{ne}$ such that $\lambda(U_{m}) \geq \lambda(U_{\iota})$ for every $U_{\iota} \in \cU_{ne}$. 
Lemma \ref{e2e and w2w} tells us that $f(U_{m}) \in \mathcal{U}_{ne}$. If we can prove that $\lambda(f(U_{m})) > \lambda(U_{m})$, then we have reached a contradiction.

To see why $\lambda(f(U_{m})) > \lambda(U_{m})$, take $\tU_{m} \subset \RR$ such that $\pi:\tU_{m} \to U_{m}$ is injective. 
Let $w \in \ZZ \setminus A(\ZZ)$. 
Such  $w$ exists because $|\det (A)| \geq 2$. Denote by $U_{m}'$ the set $\pi(\tf^{-1}(\tU_{m} + w))$. 
Then $U_{m}  \cap U_{m}' = \emptyset$ but $f(U_{m} ') = f(U_{m})$. 
In particular, $U_{m} \cup U_{m}' \subset f^{-1}(f(U_{m}))$, so $\lambda(U_{m}) \leq \lambda(f(U_{m})) - \lambda(U_{m} ') < \lambda(f(U_{m}))$.
\end{proof}

\begin{lemma}\label{trivial FG}
Let $f: \TT \to \TT$ be an area preserving non-invertible endomorphism and suppose that  $U\subset \TT$ is a regular open strictly $f$-invariant set. Then every connected component of $\pi^{-1}(U)$ is simply connected.
\end{lemma}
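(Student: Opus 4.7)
The plan is to argue by contradiction: suppose some connected component $\tU_\alpha$ of $\tU:=\pi^{-1}(U)$ fails to be simply connected. Since $\tU_\alpha$ is an open subset of $\RR^2$, this means $\RR^2 \setminus \tU_\alpha$ has a bounded connected component $K$. I would also work with $V:=U^\perp$, which by \eqref{perp and inverse commute} is again a regular open strictly $f$-invariant set with lift $\tV=\RR^2\setminus\overline{\tU}$, so Lemma~\ref{not elementary} applies equally to $V$. The case analysis splits according to which of the three pairwise disjoint sets $\tU\setminus \tU_\alpha$, $\tV$, $\partial\tU$ the component $K$ meets. If $K$ intersects some other component $\tU_\gamma$ with $\gamma\neq\alpha$, then by maximality of $K$ as a connected component of $\RR^2\setminus\tU_\alpha$ the whole of $\tU_\gamma$ is contained in $K$, hence bounded; its $\ZZ$-stabilizer must then be trivial, so by Proposition~\ref{characterization} the set $\pi(\tU_\gamma)$ is an elementary component of $U$, contradicting Lemma~\ref{not elementary}. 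If instead $K$ meets a component $\tV_\delta$ of $\tV$, the same argument produces an elementary component of $V$, again contradicting Lemma~\ref{not elementary} (now applied to $V$).

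The remaining case is that every bounded complementary component of $\tU_\alpha$ lies entirely in $\partial \tU$. For this I would introduce the ``filled hull'' $\widehat{\tU_\alpha}:=\RR^2\setminus K_\infty$, where $K_\infty$ is the unbounded component of $\RR^2 \setminus \tU_\alpha$. Since connected components of closed sets are closed, $\widehat{\tU_\alpha}$ is open, and by construction $\widehat{\tU_\alpha}\setminus\tU_\alpha$ is precisely the union of the bounded complementary components, which by the case assumption is contained in $\partial\tU$. Therefore $\widehat{\tU_\alpha}\subset\tU\cup\partial\tU=\overline{\tU}$. Because $\pi$ is a local homeomorphism, $\pi^{-1}$ commutes with both interior and closure, so $\tU$ inherits regularity from $U$, giving $\mathrm{int}(\overline{\tU})=\tU$. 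Being open, $\widehat{\tU_\alpha}$ is then forced inside $\tU$; but $\widehat{\tU_\alpha}$ also contains the bounded component $K\subset\partial\tU$, which is disjoint from $\tU$. This contradiction concludes the argument.

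I expect the third case to be the main obstacle. A bounded continuum $K\subset\partial\tU$ is neither a component of $\tU$ nor of $\tV$, so Lemma~\ref{not elementary} cannot be invoked against $K$ itself. The role of the hull construction is precisely to convert the situation into a statement about an \emph{open} subset of $\overline{\tU}$, at which point the regularity hypothesis on $U$ becomes indispensable to rule out such a configuration.
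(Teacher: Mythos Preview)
Your argument is correct and takes a genuinely different route from the paper's. The paper argues directly with winding numbers: assuming a loop $\tgamma\subset\tU_0$ has nonzero winding number about some point outside $\tU_0$, it uses regularity to find a nearby $\ty\in\tV$ with nonzero winding number, then exploits that the component $\tV_0\ni\ty$ is winding---hence invariant under a nonzero integer translation---to produce a far-away translate $\tz\in\tV_0$ about which the winding number vanishes; any path in $\tV_0$ from $\ty$ to $\tz$ must then cross $\tgamma$, a contradiction. Your approach instead converts non--simple-connectivity into the existence of a bounded complementary component $K$ and disposes of it by cases: if $K$ traps a component of $\tU$ or of $\tV$, that component is bounded, contradicting Lemma~\ref{not elementary}; otherwise the filled hull is an open subset of $\overline{\tU}$ strictly larger than $\tU_\alpha$, violating regularity. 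Your version is a bit more structural and makes the role of the regularity hypothesis (ruling out the purely-boundary case) especially transparent; the paper's version is shorter and avoids the case split, at the cost of invoking the winding-number formalism.

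One small imprecision: you speak of ``the unbounded component'' $K_\infty$ of $\RR\setminus\tU_\alpha$, but there may be several (e.g.\ when $\tU_\alpha$ is a strip), and your justification that $\widehat{\tU_\alpha}$ is open (``components of closed sets are closed'') only covers a single component, not an arbitrary union. The cleanest repair is to pass to $S^2$: let $C_\infty$ be the component of $S^2\setminus\tU_\alpha$ containing $\infty$ and set $\widehat{\tU_\alpha}=S^2\setminus C_\infty$. This is automatically open, contains $\tU_\alpha$ properly (since $S^2\setminus\tU_\alpha$ is disconnected), and $\widehat{\tU_\alpha}\setminus\tU_\alpha$ is a union of bounded components of $\RR\setminus\tU_\alpha$, so the remainder of your third-case argument goes through verbatim.
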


\begin{proof}
The case $U = \TT $ is trivial, so in what follows we suppose that $U \neq \TT$. Then $V = U^{\perp}$ is also a regular open set. Moreover, it follows from the identity (\ref{perp and inverse commute}) that $V$ is strictly $f$-invariant. We know from Lemma \ref{not nonessential} that every connected component of $U$ is essential. So is every connected component of $V$. Let $\tU = \pi^{-1}(U)$ and $\tV=\pi^{-1}(V)$. Note that since $U$ and $V$ are regular, so are $\tU$ and $\tV$. It is straightforward to check that connected components of regular open sets are regular. Hence every connected component of $\tU$ and  of $\tV$ is regular.

Let $\tU_0$ be any connected component of $\tU$. We must prove that $\tU$ is simply connected. Suppose, for the purpose of contradiction, that $\tU_0$ is not simply connected. The there exists a loop $\tgamma: S^1 \to \tU_0$ and some $\tx \in \RR \setminus \tU_0$ such that the winding number of $\tgamma$ around $\tx$ is non-zero. 
Since $\tU_0$ is regular, any neighborhood of $\tx$ intersects $\tV$. 
In particular, we can choose $\ty \in \tV$ such that the winding number of 
$\tgamma$ around $\ty$ is non-zero. We denote by $\tV_0$ the connected component of $\tV$ that contains $\ty$. It follows from Lemma \ref{not nonessential} that $\tV_0$ is essential. Hence, by Proposition \ref{characterization}, there is some non-zero $v \in \ZZ$ such that $\ty + n v \in \tV_0$ for every $n \in \Z$. 
Choose $n$ large enough so that the winding number of $\tgamma$ around $\tz = \ty + vn$ is zero. 
Since $\tV_0$ is connected, there is a path $\tsigma$ from $\ty$ to $\tz$. 
Since the winding number of $\tgamma$ is different at $\ty$ and $\tz$, we conclude that $\image \tgamma \cap \image \tsigma \neq \emptyset$. 
But that is absurd because $\image \tgamma \subset \tU_0$ and $\image \tsigma \subset \tV_0$. 
\end{proof}

\begin{lemma}\label{trivial kernel}
Let $f:\TT \to \TT $ be a non-invertible area preserving endomorphism and suppose that $U\subset \TT$ is a regular open strictly $f$-invariant set. If $U_0$ is a connected component of $U$ and $i:U_0 \to \TT$ the inclusion map, then the kernel of $i_\star: \pi_1(U_0) \to \pi_1(\TT)$ is trivial.
\end{lemma}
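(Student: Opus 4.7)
The plan is to derive this lemma directly from Lemma \ref{trivial FG} via elementary covering space reasoning. The set $U_0$ is open and connected in $\TT$, hence path-connected, so $\pi_1(U_0)$ can be probed with loops. Fix a basepoint $x_0 \in U_0$ and a lift $\tx_0 \in \pi^{-1}(x_0)$, and let $\tU_0$ denote the connected component of $\pi^{-1}(U_0)$ containing $\tx_0$. I will first observe that $\tU_0$ is in fact a connected component of the larger set $\pi^{-1}(U)$: the distinct connected components $U_\alpha$ of $U$ are pairwise disjoint, so the sets $\pi^{-1}(U_\alpha)$ are also pairwise disjoint open subsets of $\RR$, and the connected components of $\pi^{-1}(U)$ are precisely the connected components of the various $\pi^{-1}(U_\alpha)$. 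Consequently, Lemma \ref{trivial FG} applies and tells us that $\tU_0$ is simply connected.

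Next I take an arbitrary $[\gamma]\in \ker i_\star$ represented by a loop $\gamma:[0,1]\to U_0$ based at $x_0$. Since $i_\star[\gamma]=0$ in $\pi_1(\TT)$, the loop $\gamma$ is null-homotopic in $\TT$. Lifting $\gamma$ via $\pi:\RR \to \TT$ starting at $\tx_0$ produces a path $\tgamma:[0,1]\to \RR$; since $\RR$ is simply connected, this lift is in fact a loop (its endpoint is forced to be $\tx_0$ because a null-homotopy in $\TT$ lifts to $\RR$). The image of $\tgamma$ is contained in $\pi^{-1}(U_0)$ and is connected and contains $\tx_0$, so it lies inside $\tU_0$.

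Now I use that $\tU_0$ is simply connected: the loop $\tgamma$ in $\tU_0$ bounds a continuous nullhomotopy $\tilde H : [0,1]^2 \to \tU_0$. Projecting through $\pi$ yields $\pi \circ \tilde H : [0,1]^2 \to U_0$, which is a nullhomotopy of $\gamma$ inside $U_0$. Therefore $[\gamma]=0$ in $\pi_1(U_0)$, showing $\ker i_\star$ is trivial.

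The only potential subtlety is the identification of connected components of $\pi^{-1}(U_0)$ with connected components of $\pi^{-1}(U)$, which is needed to invoke Lemma \ref{trivial FG}; this is handled in the first paragraph. Beyond that, the argument is a standard consequence of the universal covering picture: once $\tU_0$ is known to be simply connected, it serves as the universal cover of $U_0$, and the map $i_\star$ simply records which deck translation of $\RR$ is realized by a lifted loop, so its kernel vanishes.
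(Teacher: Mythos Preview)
Your argument is correct and follows exactly the approach the paper intends: the paper's own proof is simply ``This is a direct consequence of Lemma~\ref{trivial FG},'' and you have spelled out precisely the standard covering-space reasoning that makes that deduction work. Your additional care in identifying connected components of $\pi^{-1}(U_0)$ with those of $\pi^{-1}(U)$ is a helpful clarification but not a departure from the paper's method.
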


\begin{proof}
This is a direct consequence of Lemma \ref{trivial FG}.
\end{proof}

It is instructive to consider the example $f(x,y) = (2x, y + \frac{1}{2}) \ \mod \ZZ$ on $\TT$. The set $U = \T \times ((0,\frac{1}{4}) \cup (\frac{1}{2},\frac{3}{4}))$ is a regular open strictly invariant set under $f$. In accordance with Lemma \ref{trivial FG}, every connected component of $\pi^{-1}(U)$ is simply connected. Of course, $U$ itself is not connected. Neither is any of its two connected components invariant under $f$. However, each of them is invariant under $f^2$. The next Lemma shows that this is to be expected.

\begin{lemma}\label{invariant under iterate}
Let $f: \TT \to \TT $ be an area preserving non-invertible endomorphism and suppose that $U \subset \TT$ is a strictly $f$-invariant regular open set. If $U_0$ is a connected component of $U$ then there exists $n\geq 1$ such that $U_0$ is strictly invariant under $f^n$.
\end{lemma}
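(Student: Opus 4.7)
My plan is to push the dynamics of $f$ down to the countable set $\cW$ of connected components of $U$, show that the induced map is measure preserving for the natural atomic measure, and then apply a Poincar\'e-recurrence-style argument.

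Since $f$ is a local homeomorphism and $f(U)=f(f^{-1}(U))=U$ by surjectivity, each connected component $U_\iota$ of $U$ has connected image $f(U_\iota)\subset U$ lying in a unique component $\phi(U_\iota)\in\cW$, which defines a self-map $\phi:\cW\to\cW$. The bookkeeping identity
\[
f^{-1}(U_\iota) \;=\; \bigsqcup_{V\,\in\,\phi^{-1}(\{U_\iota\})} V
\]
is immediate: $f^{-1}(U_\iota)\subset f^{-1}(U)=U$ is open, hence a union of components, and a component $V$ contributes iff $\phi(V)=U_\iota$. Equipping $\cW$ with the atomic measure $\mu(\{U_\iota\})=\lambda(U_\iota)$, the total mass is $\mu(\cW)=\lambda(U)\leq 1$, and area preservation of $f$ translates immediately into $\mu$-preservation of $\phi$, since $\mu(\phi^{-1}(\{U_\iota\}))=\lambda(f^{-1}(U_\iota))=\lambda(U_\iota)=\mu(\{U_\iota\})$.

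I would then observe that $U_0$ must be periodic under $\phi$: otherwise the preimage sets $\phi^{-n}(\{U_0\})$ for $n\geq 0$ would be pairwise disjoint (a common element would force $\phi^k(U_0)=U_0$ for some $k\geq 1$) and each of $\mu$-measure $\lambda(U_0)>0$, making $\mu(\cW)$ infinite. Once $\phi^n(U_0)=U_0$ for some $n\geq 1$, the same decomposition gives $f^{-n}(U_0)=\bigsqcup_{V\,\in\,\phi^{-n}(\{U_0\})}V\supset U_0$; but area preservation yields $\lambda(f^{-n}(U_0))=\lambda(U_0)$, and since every non-empty open subset of $\TT$ has positive Haar measure, no other component can appear in the union, so $f^{-n}(U_0)=U_0$.

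I don't foresee a real obstacle here; the argument is essentially Poincar\'e recurrence pushed through the natural quotient to components, with area preservation upgrading periodicity of $U_0$ as an element of $\cW$ to genuine strict invariance under $f^n$. The only subtle point is the bookkeeping that $f^{-1}$ respects the component decomposition of $U$. It is perhaps worth noting that, in contrast to the preceding lemmas, neither winding nor fundamental-group information enters this argument---area preservation alone suffices.
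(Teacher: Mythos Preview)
Your argument is correct and in fact cleaner and slightly more general than the paper's. The paper proceeds by contradiction: it partitions the components into $\cU_T$ (those satisfying the conclusion) and $\cU_F$ (those that do not), shows in a first step that $f$ carries $\cU_F$ into itself, and in a second step picks a component of maximal area in $\cU_F$ and shows some forward image of it lies in $\cU_T$. Both steps rely on the fact, imported from Lemma~\ref{e2e and w2w}, that $f$ maps components of $U$ \emph{onto} components of $U$, and on the regularity of $U$ to pass from ``open subset of full measure'' to genuine equality.

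Your route avoids both ingredients. Passing to the quotient $\phi:\cW\to\cW$ only requires that $f$ send each component \emph{into} some component, and your bookkeeping identity $f^{-1}(U_\iota)=\bigsqcup_{\phi(V)=U_\iota}V$ together with positivity of Haar measure on non-empty open sets replaces the regularity argument entirely. The Poincar\'e-recurrence step is then the natural way to produce the period $n$. As you observe, neither non-invertibility nor regularity nor any winding information is invoked, so your proof establishes the lemma for an arbitrary area-preserving local homeomorphism of $\TT$ and an arbitrary strictly invariant open set.
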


\begin{proof}
Let $f$ and $U$ be as in the hypothesis of the lemma and suppose that $U_0$ is a connected component of $U$. Since $f$ is area preserving, there exists $n \geq 1$ such that $f^n(U_0) \cap U_0 \neq \emptyset$. But then $f^n(U_0) = U_0$ according to Lemma \ref{e2e and w2w}. Hence $U_0 \subset f^{-n}(f^n(U_0)) = f^{-n}(U_0)$. Since $f$ is area preserving, it follows that  $U_0$ has full $\lambda$-measure in $f^{-n}(U_0)$. In particular, $U_0$ is  dense in $f^{-n}(U_0)$. But both $U_0$ and $f^{-n}(U_0)$ are regular. They must therefore coincide.
\end{proof}

\section{Some further auxiliaries}

The proof of Theorem \ref{main} is centered around the following result from the basic theory of covering spaces.

\begin{theorem}\label{number of sheets}
Let $X$ be a path connected topological space. Suppose that  $(\tilde{X},p)$ is a covering space of $X$ and that $p(\tx_0)=x_0$. Then the number of sheets of $p$ is equal to the index of $p_\star \pi_1(\tilde{X}, \tx_0)$ in $\pi_1(X,x_0)$.
\end{theorem}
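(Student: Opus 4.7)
The plan is to construct an explicit bijection between the fiber $p^{-1}(x_0)$ and the set of right cosets of $H := p_\star \pi_1(\tilde{X}, \tx_0)$ in $G := \pi_1(X,x_0)$. The whole argument rests on the path and homotopy lifting properties of the covering map $p$, so I would first recall these and then define the bijection, checking it is well defined, injective and surjective in turn. As is customary in the statement of this result, I would also add the hypothesis (or verify it is implicit in the usage of the word ``covering space'' in this paper) that $\tilde{X}$ be path connected; without it, surjectivity fails.

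Define $\Phi : G \to p^{-1}(x_0)$ as follows: given a loop $\gamma$ in $X$ based at $x_0$, lift it uniquely to a path $\tgamma$ in $\tilde{X}$ starting at $\tx_0$, and set $\Phi([\gamma]) = \tgamma(1)$. The homotopy lifting property makes this independent of the representative in $[\gamma]$. To see that $\Phi$ factors through the quotient $H \backslash G$, suppose $[\gamma_1], [\gamma_2] \in G$ satisfy $[\gamma_1] = [\alpha][\gamma_2]$ with $[\alpha] \in H$. Lifting $\alpha$ to a loop $\tilde{\alpha}$ at $\tx_0$ and then lifting $\gamma_2$ starting from $\tilde{\alpha}(1) = \tx_0$, the concatenation $\tilde{\alpha} \cdot \tilde{\gamma}_2$ is a lift of $\alpha \cdot \gamma_2 \simeq \gamma_1$ starting at $\tx_0$, so by uniqueness of lifts $\tilde{\gamma}_1(1) = \tilde{\gamma}_2(1)$.

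Surjectivity follows from path connectedness of $\tilde{X}$: any $\tx \in p^{-1}(x_0)$ is joined to $\tx_0$ by a path $\tsigma$, whose projection $\sigma = p \circ \tsigma$ is a loop at $x_0$ with $\Phi([\sigma]) = \tx$ by uniqueness of lifts. For injectivity, suppose $\Phi([\gamma_1]) = \Phi([\gamma_2])$. Then $\tilde{\gamma}_1 \cdot \tilde{\gamma}_2^{-1}$ is a loop in $\tilde{X}$ at $\tx_0$ that projects to $\gamma_1 \cdot \gamma_2^{-1}$, so $[\gamma_1][\gamma_2]^{-1} \in H$ and $[\gamma_1], [\gamma_2]$ lie in the same right coset.

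There is no real obstacle here; the argument is purely formal once path and homotopy lifting are in place. The only subtle point worth flagging is the dependence on path connectedness of $\tilde{X}$ for surjectivity, which is the step where a choice of path from $\tx_0$ to a prescribed fiber point $\tx$ is made. Since $\Phi$ descends to a bijection $H\backslash G \to p^{-1}(x_0)$, and the cardinality of $p^{-1}(x_0)$ is by definition the number of sheets, the theorem follows.
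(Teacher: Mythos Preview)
The paper does not actually supply a proof of this theorem; it is quoted as a standard fact ``from the basic theory of covering spaces'' and used as a black box in the proof of Theorem~\ref{main}. Your argument is the classical textbook proof (via the lifting correspondence between the fiber $p^{-1}(x_0)$ and the coset space $H\backslash G$) and is correct as written, including your remark that path connectedness of $\tilde{X}$ is needed for surjectivity.
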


We shall also make use of the following  intersection property.

\begin{lemma}\label{intersecting loops}
Suppose that $\gamma$ and $\sigma$ are loops in $\TT$ such that $[\sigma]$ and $[\gamma]$ are linearly independent in $\ZZ$. Then  $\gamma$ and $\sigma$ intersect.
\end{lemma}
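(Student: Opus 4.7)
The plan is a proof by contradiction via degree theory. Suppose $\image(\gamma) \cap \image(\sigma) = \emptyset$, and define the continuous map $F \colon \TT \to \TT$ by $F(s, t) = \gamma(s) - \sigma(t)$, using the abelian Lie group structure on $\TT = \RR/\ZZ$. The disjointness hypothesis becomes $F(s, t) \neq 0$ for every $(s, t) \in \TT$, so $F$ misses a point, and hence a continuous self-map of a closed connected orientable manifold that is not surjective has degree zero.

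I would then compute $\deg F$ a different way. Fixing $t_0 \in S^1$, the loop $s \mapsto F(s, t_0)$ is a translate of $\gamma$ and therefore represents $[\gamma]$ in $\pi_1(\TT) = \ZZ$; symmetrically, the loop $t \mapsto F(s_0, t)$ represents $-[\sigma]$. Thus the induced homomorphism $F_\star \colon \pi_1(\TT) \to \pi_1(\TT)$ is the $2 \times 2$ integer matrix with columns $[\gamma]$ and $-[\sigma]$. By the standard identity $\deg F = \det F_\star$ for continuous self-maps of $\TT$---the same identity implicit in Section 2, where the number of sheets $|\det A|$ of a toral endomorphism is identified with the absolute value of its degree---one obtains $\deg F = -\det([\gamma] \mid [\sigma])$. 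By the linear-independence hypothesis this is non-zero, contradicting $\deg F = 0$, so $\gamma$ and $\sigma$ must meet.

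The main non-routine step is the identification $\deg F = \det F_\star$, which is standard and can be verified, for instance, by homotoping $F$ to its unique linearly induced representative $A \in M_2(\Z)$ and invoking $\deg A = \det A$ for linear toral maps, which is immediate from counting preimages of a regular point.
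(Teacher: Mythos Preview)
Your proof is correct. Both your argument and the paper's are degree-theoretic contradictions built on the difference map, but they are packaged differently. The paper lifts the loops to $\tilde{\Gamma},\tilde{\Sigma}:\R\to\RR$, forms the Gauss-type map
\[
(s,t)\longmapsto \frac{\tilde{\Gamma}(s)-\tilde{\Sigma}(t)}{\|\tilde{\Gamma}(s)-\tilde{\Sigma}(t)\|}\in S^1,
\]
and argues that its restriction to a sufficiently large circle in $\RR$ has winding number $\pm 1$, which is impossible since that circle is contractible. The step that the large circle has non-zero winding number tacitly uses that $\tilde{\Gamma}(s)-\tilde{\Sigma}(t)=s[\gamma]-t[\sigma]+O(1)$ together with the linear independence hypothesis, so the analysis happens at infinity in the universal cover.

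Your version sidesteps the universal cover entirely by exploiting the group structure of $\TT$: the difference descends to a map $F:\TT\to\TT$, non-surjectivity gives $\deg F=0$ for free, and the homotopy-invariance identity $\deg F=\det F_\star$ turns the linear independence directly into $\deg F\neq 0$. This is cleaner and more self-contained; the paper's approach, by contrast, is closer to the classical intersection-number picture via winding numbers and does not need to quote the $\deg=\det$ fact for torus self-maps. Either way the underlying obstruction is the same non-vanishing of $\det\bigl([\gamma]\mid[\sigma]\bigr)$.
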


Rigorous proof Lemma \ref{intersecting loops} is more subtle than a cursory glance may suggest. The author found no suitable reference, so for the sake of completeness we include a proof based on elementary homotopy theory. 

\begin{proof}[Proof of Lemma \ref{intersecting loops}]
Let $\Gamma: \R \to \TT$ and $\Sigma: \R \to \TT$ be periodic extensions of $\gamma$ and $\sigma$ respectively. That is, $\Gamma(t+n) = \gamma(t)$ and $\Sigma(t+n) = \sigma(t)$ for every $t \in [0,1]$ and every $n \in \Z$. Let $\tilde{\Gamma}$ and $\tilde{\Sigma}$ be lifts of $\Gamma$ and $\Sigma$ to $\RR$.  Up to a change of coordinates, we can (and do) assume that $\tG(1)-\tG(0) = (1,0)$ and $\tS(1)-\tS(0) = (0,1)$. Then $\tG(t+n) = \tG(t) + (n,0)$ and $\tS(t+n) = \tS(t)+(0,n)$ for every $t \in[0,1]$ and every $n \in \Z$. In particular there exists $K$ such that $\| \tG(t)-(t,0)\| \leq K$ and $\| \tS(t) - (0,t)\| \leq K$ for every $t \in \R$.

Let $S^1 = \{(s,t) \in \RR: \|(s,t)\| = 1\}$ and let 
\begin{equation}
R = \{ r \geq 0 : \tG(rs) \neq \tS(rt) \text{ for every }(s,t) \in S^1 \}.
\end{equation}
For every $r \in R$ we can define a map $h_r: S^1 \to S^1$ by
\begin{equation}
h_r(x,y) = \frac{\tilde{\Gamma}(rs) - \tilde{\Sigma}(rt)}{\| \tilde{\Gamma}(rs) - \tilde{\Sigma}(rt)\|}.
\end{equation}
  It is straightforward to check that the family $h_r$ depends continuously on $r$ in $C^0(S^1, S^1)$. We shall  prove that $R$ contains  $(2K, \infty)$ and that as $r$ tends to infinity, $h_r$ converges uniformly to the involution $I: S^1 \to S^1$, $(s,t) \mapsto (s,-t)$. The proof may then be concluded as follows: Suppose that $\gamma$ and $\sigma$ do not intersect. Then $R = [0,\infty)$. The uniform convergence of $h_r$ to $I$ as $r \to \infty$ implies that $h_0$ is homotopic to the involution $I$. But this is abusurd, since $h_0$ is a constant map.  Therefore, $R \neq [0,\infty)$, so $\tG(rs) = \tS(rt)$ for some $r \geq 0$ and some $(s,t)\in S^1$. Therefore (the images of ) $\gamma$ and $\sigma$ must intersect.

It remains to prove that $h_r$ converges uniformly to $I$ as $r \to \infty$.
For notational convenience, write $u = \tG(rs) - \tS(rt)$ and $v = I(s,t) = (s,-t)$ (leaving the dependence on $r, s$ and $t$ understood implicitly). Thus $h_r = u / \| u \|$ and we must prove that 
\begin{equation}
\left\| \frac{u}{\| u \|} - v \right\| 
\end{equation}
converges to zero uniformly on $S^1$ as $r \to \infty$.

Note that
\begin{equation}
\|u-rv \| \leq \|\tG(rs)-(rs,0)\| + \|\tS(rt)-(0, rt)\| \leq 2K
\end{equation}
for every $s,t \in \R$ and every $r \geq 0$.
Note also that $\| v \| = 1$ so, using the triangle inequality, we get 
\begin{equation}
r-2K \leq \| u \| \leq r+2K,
\end{equation}
whence it follows that $(2K, \infty) \supset R$. Moreover, for  $r>2K$ we have
\begin{equation}
\left| \frac{1}{\|u\|} - \frac{1}{r} \right| \leq \frac{2K}{r(r-2K)}.
\end{equation}
Hence 
\begin{align}
 \| h_r(s,t)- I(s,t)\| &= \left\| \frac{u}{\|u\| } - v \right\| \\ & \leq \left\| \frac{u}{\|u\|} - \frac{u}{r} \right\| + \left\| \frac{u}{r}- v \right\| \\ & \leq  \frac{(r+2K)2K}{r(r-rK)} + \frac{2K}{r},
\end{align}
which converges to $0$ as $r \to \infty$. This completes the proof.

\end{proof}

\section{Proof of theorem \ref{main}}
Let $f: \TT \to \TT$ be a non-invertible conservative endomorphism which  is not transitive. Then we know from Proposition \ref{nontransitive} that there exists a complementary pair of  strictly $f$-invariant regular open sets $U,V \subset \TT$. Lemma \ref{not nonessential} tells us that all connected components of $U$ and $V$ are essential. Hence there exist a loop $\gamma$ in $U$ and a loop $\sigma$ in $V$, neither of which is homotopic in $\TT$ to a constant path. In other words, $[\gamma]$ and $[\sigma]$ are non-trivial elements in $\pi_1(U)$ and $\pi_1(V)$, respectively, as well as in $\pi_1(\TT)$.

Note that $[\gamma]$ and $[\sigma]$ must be collinear elements of $\pi_1(\TT)$, or else they would intersect according to Lemma \ref{intersecting loops}. Note also that $[\gamma]$ and $[\sigma]$  are eigenvectors of the action $A:\ZZ \to \ZZ$, (when seen as elements of $\ZZ$). Were it not so, $A[\gamma]$ and $[\sigma]$ would not be collinear, implying $f(U) \cap V \neq \emptyset$, contradicting the invariance of $U$. 

We are now able to claim that the eigenvalues of $A$ are non-zero integers. Indeed, if they were irrational or non-real, there would be no eigenvector in $\ZZ$.
But if an integer matrix has rational eigenvalues, these must indeed be integers. This follows from the rational roots theorem and the fact that the characteristic equation is a monic polynomial. Finally, $A$ is non-singular so it cannot have a vanishing eigenvalue.

 It remains to prove that one of the eigenvalues of $A$ is equal to $\pm 1$. To this end, let $k,\ell \in \Z$ be eigenvalues of $A$ and suppose without loss of generality that $k$ is the eigenvalue associated to $[\gamma]$. Note that the determinant of $A$ is equal to $k \ell$. In particular, the number of sheets of $f$ is equal to $|k \ell|$. If $|k|=1$ there is nothing to prove. So, to prove the theorem we suppose that $|k| \geq 2$ and shall deduce that in this case $|\ell| = 1$. 

Let $U_0$ be the connected component of $U$ containing $\gamma$, and $i: U_0 \to \TT$ the inclusion map. We have already observed that every element $[\tau]$ of $i_\star \pi_1(U_0)$ must be collinear with $[\gamma]$. It follows that, with the canonical identification of $\pi_1(\TT)$ with $\ZZ$, 
the subgroup $ i_\star \pi_1(U)$ in $\ZZ$ is of the form $\{rv: r \in \Z \}$ for some non-zero $v \in \ZZ$.  By Lemma \ref{trivial kernel}, $i_\star$ is a monomorphism. Hence $\pi_1(U_0)=\{ rv: r \in \Z \} \subset \ZZ$.

By Lemma \ref{invariant under iterate} there exists $n \geq 1$ such that $U_0$ is strictly invariant under $f^n$. Notice that the pair $(U_0, f^n \vert U_0)$ can be seen as a covering space of $U_0$. 
Therefore, Theorem \ref{number of sheets} implies that the number of sheets of $f^n\vert U_0$ (and hence of $f^n$) is equal to the index of  $f_\star^n (\pi_1(U_0))$ as a subgroup of $\pi_1(U_0)$. Recall that every $[\gamma] \in \pi_1(U_0)$ is an eigenvector of $A^n$ with eigenvalue $k^n$.  Hence $f_\star^n (\pi_1(U_0)) = \{r k^n v: r \in \Z \}$ so that the index of $f_\star^n (\pi_1(U_0))$ in $\pi_1(U_0)$ is $|k|^n$. On the other hand, we have already observed that the number of sheets of $f$ (hence of $f\vert U_0$) is equal to $|k\ell|$, so the number of sheets of $f^n \vert U_0$ must be equal to $|k\ell|^n$. It follows that $|\ell| = 1$ as required.

\bibliographystyle{plain}

\bibliography{transitive}

\def\cprime{$'$}
\begin{thebibliography}{1}

\bibitem{2015arXiv150306501A}
M.~{Andersson} and S.~{Gan}.
\newblock {Transitivity of conservative diffeomorphisms isotopic to Anosov on
  $\mathbb{T}^3$}.
\newblock {\em arXiv:1503.06501}, March 2015.

\bibitem{aoki1994topological}
N.~Aoki and K.~Hiraide.
\newblock {\em Topological Theory of Dynamical Systems: Recent Advances}.
\newblock North-Holland Mathematical Library. Elsevier Science, 1994.

\bibitem{MR1846198}
F.~Brini and S.~Siboni.
\newblock Estimates of correlation decay in auto/endomorphisms of the
  {$n$}-torus.
\newblock {\em Comput. Math. Appl.}, 42(6-7):941--951, 2001.

\bibitem{MR2466574}
Steven Givant and Paul Halmos.
\newblock {\em Introduction to {B}oolean algebras}.
\newblock Undergraduate Texts in Mathematics. Springer, New York, 2009.

\bibitem{MR3043026}
Baolin He and Shaobo Gan.
\newblock Robustly non-hyperbolic transitive endomorphisms on {$\Bbb{T}^2$}.
\newblock {\em Proc. Amer. Math. Soc.}, 141(7):2453--2465, 2013.

\bibitem{MR2321252}
Andrew~J. Hetzel, Jay~S. Liew, and Kent~E. Morrison.
\newblock The probability that a matrix of integers is diagonalizable.
\newblock {\em Amer. Math. Monthly}, 114(6):491--499, 2007.

\bibitem{LPV}
Cristina Lizana, Vilton Pinheiro, and Paulo Varandas.
\newblock Contribution to the ergodic theory of robustly transitive maps.
\newblock preprint.

\bibitem{MR3082540}
Cristina Lizana and Enrique Pujals.
\newblock Robust transitivity for endomorphisms.
\newblock {\em Ergodic Theory Dynam. Systems}, 33(4):1082--1114, 2013.

\bibitem{MR1469436}
Naoya Sumi.
\newblock A class of differentiable toral maps which are topologically mixing.
\newblock {\em Proc. Amer. Math. Soc.}, 127(3):915--924, 1999.

\end{thebibliography}

\end{document}